\newtheorem{theorem}{Theorem}[section]
\newtheorem{lemma}[theorem]{Lemma}
\newtheorem{remark}[theorem]{Remark}
\newtheorem{example}[theorem]{Example}
\newtheorem{proposition}[theorem]{Proposition}  
\newproof{proof}{Proof}
\newcommand{\norm}[1]{\ensuremath{\| #1 \|}}
\newcommand{\real}{{\mathbb{R}}}
\newcommand{\realpositive}{{\mathbb{R}}_{>0}}
\newcommand{\realnonnegative}{{\mathbb{R}}_{\ge 0}}
\newcommand{\integerspositive}{\mathbb{Z}_{\geq 1}}
\newcommand{\eps}{\epsilon}
\newcommand{\argmin}{\operatorname{argmin}}
\newcommand{\until}[1]{\{1,\dots,#1\}}
\newcommand{\map}[3]{#1:#2 \rightarrow #3}
\newcommand{\setmap}[3]{#1:#2 \rightrightarrows #3}
\newcommand{\Lie}{\mathcal{L}}
\newcommand{\gradient}{\nabla}
\newcommand{\zeros}{\mathbf{0}}
\newcommand{\setdef}[2]{\{#1 \; | \; #2\}}
\renewcommand{\SS}{\mathcal{S}}
\newcommand{\WW}{\mathcal{W}}
\newcommand{\DD}{\real^n \times \realnonnegative^m}
\newcommand{\KK}{\mathcal{K}}
\newcommand{\intr}{\mathrm{int}}
\newcommand{\cl}{\mathrm{cl}}
\newcommand{\bd}{\mathrm{bd}}
\newcommand{\xo}{x_{*}}
\newcommand{\xt}{\tilde{x}}
\newcommand{\lmt}{\tilde{\lambda}}
\newcommand{\Xp}{X_{\text{p-d}}}
\newcommand{\proj}{\mathrm{proj}}
\newcommand{\lm}{\lambda}
\newcommand{\lmo}{\lambda_{*}}
\newcommand{\sX}{\mathsf{X}}
\newcommand{\slm}{\Lambda}
\newcommand{\oprocendsymbol}{\hbox{$\bullet$}}
\newcommand{\oprocend}{\relax\ifmmode\else\unskip\hfill\fi\oprocendsymbol}
\newcommand{\longthmtitle}[1]{\mbox{}\textup{\textsl{(#1):}}}
\newcommand{\myclearpage}{\clearpage}
\renewcommand{\myclearpage}{}
\begin{document}

\begin{frontmatter}

  \title{Asymptotic convergence\\
    of constrained primal-dual dynamics}

  
  \author[ucsd]{Ashish Cherukuri}\ead{acheruku@ucsd.edu} 
  \author[caltech]{Enrique Mallada}\ead{mallada@caltech.edu}
  \author[ucsd]{Jorge Cort\'{e}s}\ead{cortes@ucsd.edu}

  \address[ucsd]{Department of Mechanical and Aerospace Engineering,
    University of California, San Diego, CA 92093, USA}
  \address[caltech]{Department of Computational and Mathematical
    Sciences, California Institute of Technology, Pasadena, CA 91125,
    USA}

\begin{abstract}
  This paper studies the asymptotic convergence properties of the
  primal-dual dynamics designed for solving constrained concave
  optimization problems using classical notions from stability
  analysis.  We motivate the need for this study by providing an
  example that rules out the possibility of employing the invariance
  principle for hybrid automata to study asymptotic convergence.  We
  understand the solutions of the primal-dual dynamics in the
  Caratheodory sense and characterize their existence, uniqueness, and
  continuity with respect to the initial condition.  We use the
  invariance principle for discontinuous Caratheodory systems to
  establish that the primal-dual optimizers are globally
  asymptotically stable under the primal-dual dynamics and that each
  solution of the dynamics converges to an optimizer.
\end{abstract}

\begin{keyword}
  primal-dual dynamics; constrained optimization; saddle points;
  discontinuous dynamics; Caratheodory solutions
\end{keyword}

\end{frontmatter}

\section{Introduction}\label{se:Intro}

The (constrained) primal-dual dynamics is a widespread continuous-time
algorithm for determining the primal and dual solutions of an
inequality constrained convex (or concave) optimization problem.  This
dynamics, first introduced in the pioneering
works~\cite{KA-LH-HU:58,TK:56}, has been used in multiple
applications, including network resource allocation problems for
wireless systems~\cite{DF-FP:10,JC-VKNL:12,AF-FP:14} and distributed
stabilization and optimization of power
networks~\cite{XM-NE:13,CZ-UT-NL-SL:14,EM-CZ-SL:14,XZ-AP:14}. 


Our objective in this paper is to provide a rigorous treatment of the
convergence analysis of the primal-dual dynamics using classical
notions from stability analysis. Since this dynamics has a
discontinuous right-hand side, the standard Lyapunov or LaSalle-based
stability results for nonlinear systems, see e.g.~\cite{HKK:02}, are
not directly applicable. This observation is at the basis of the
direct approach to establish convergence taken in~\cite{KA-LH-HU:58},
where the evolution of the distance of the solution of the primal-dual
dynamics to an arbitrary primal-dual optimizer is approximated using
power series expansions and its monotonic evolution is concluded by
analyzing the local behavior around a saddle point of the terms in the
series. 
Instead,~\cite{DF-FP:10} takes an indirect approach to establish
convergence, modeling the primal-dual dynamics as a hybrid automaton
as defined in~\cite{JL-KHJ-SNS-JZ-SSS:03}, and invoking a generalized
LaSalle Invariance Principle to establish asymptotic
convergence. However, the hybrid automaton that corresponds to the
primal-dual dynamics is in general not continuous, thereby not
satisfying a key requirement of the invariance principle stated
in~\cite{JL-KHJ-SNS-JZ-SSS:03}, and invalidating this route to
establish convergence. The first contribution of this paper is an
example that illustrates this point. Our second contribution is an
alternative proof strategy to arrive at the same convergence results
of~\cite{DF-FP:10}.  

For the problem setup, we consider an inequality constrained concave
optimization problem described by continuously differentiable
functions with locally Lipschitz gradients.  Since the primal-dual
dynamics has a discontinuous right-hand side, we specify the notion of
solution in the Caratheodory sense (note that this does not
necessarily preclude the study of other notions of solution).
We show that the primal-dual dynamics is a particular case of a
projected dynamical system and, using results from~\cite{AN-DZ:96}, we
establish that Caratheodory solutions exist, are unique, and are
continuous with respect to the initial condition. Using these
properties, we show that the omega-limit set of any solution of the
primal-dual dynamics is invariant under the dynamics. Finally, we
employ the invariance principle for Caratheodory solutions of
discontinuous dynamical systems from~\cite{AB-FC:06} to show that the
primal-dual optimizers are globally asymptotically stable under the
primal-dual dynamics and that each solution of the dynamics converges
to an optimizer.  We believe the use of classical notions of stability
and Lyapunov methods provides a conceptually simple and versatile
approach that can also be invoked in characterizing other properties
of the dynamics.

The paper is organized as follows. Section~\ref{sec:prelims} presents
basic notation and preliminary notions on discontinuous dynamical
systems. Section~\ref{sec:problem} introduces the primal-dual dynamics
and motivates with an example the need for a convergence analysis with
classical stability tools. Section~\ref{sec:convergence} presents the
main convergence results. Finally, Section~\ref{sec:conclusions}
gathers our conclusions and ideas for future work.

\myclearpage
\section{Preliminaries}\label{sec:prelims}

This section introduces notation and basic concepts about
discontinuous and projected dynamical systems.

\subsection{Notation}

We let $\real$, $\realnonnegative$, $\realpositive$, and
$\integerspositive$ be the set of real, nonnegative real, positive
real, and positive integer numbers, respectively.  We denote by
$\norm{\cdot}$ the $2$-norm on $\real^n$.  The open ball of radius
$\delta > 0$ centered at $x \in \real^n$ is represented by
$B_{\delta}(x)$. Given $x\in \real^n$, $x_i$ denotes the $i$-th
component of $x$. For $x,y \in \real^n$, $x \le y$ if and only if $x_i
\le y_i$ for all $i \in \until{n}$. We use the shorthand notation
$\zeros_n = (0,\ldots,0) \in \real^n$.  For a real-valued function
$\map{V}{\real^n}{\real}$ and $\alpha > 0$, we denote the sublevel set
of $V$ by $V^{-1}(\le \alpha) = \setdef{x \in \real^n}{V(x) \le
  \alpha}$. For scalars $a, b \in \real$, the operator $[a]_{b}^+$ is
defined as
\begin{align*}
  [a]_{b}^+ = \begin{cases} a, & \quad \text{ if } b> 0,
    \\
    \max\{0,a\}, & \quad \text{ if } b = 0.
  \end{cases}
\end{align*}
For vectors $a,b \in \real^n$, $[a]_{b}^+$ denotes the vector whose
$i$-th component is $[a_i]_{b_i}^+$, $i \in \until{n}$. For a set $\SS
\in \real^n$, its interior, closure, and boundary are denoted by
$\intr(\SS)$, $\cl(\SS)$, and $\bd(\SS)$, respectively. 
Given two sets $X$ and $Y$, a set-valued map $\setmap{f}{X}{Y}$
associates to each point in $X$ a subset of $Y$.  A map
$\map{f}{\real^n}{\real^m}$ is locally Lipschitz at $x \in \real^n$ if
there exist $\delta_x, L_x > 0$ such that $\norm{f(y_1) - f(y_2)} \le
L_x \norm{y_1 - y_2}$ for any $y_1, y_2 \in B_{\delta_x}(x)$. If $f$
is locally Lipschitz at every $x \in \KK \subset \real^n$, then we
simply say that $f$ is locally Lipschitz on~$\KK$.  The map $f$ is
Lipschitz on $\KK \subset \real^n$ if there exists a constant $L > 0$
such that $\norm{f(x) - f(y)} \le L \norm{x-y}$ for any $x,y \in
\KK$. Note that if $f$ is locally Lipschitz on $\real^n$, then it is
Lipschitz on every compact set $\KK \subset \real^n$.  The map $f$ is
locally bounded if for each $x \in \real^n$ there exists constants
$M_x, \epsilon_x>0$ such that $\norm{f(y)} \le M_x$ for all $y \in
B_{\epsilon_x}(x)$.

\subsection{Discontinuous dynamical systems}\label{subsec:disc}

Here we present basic concepts on discontinuous dynamical systems
following~\cite{AB-FC:06,JC:08-csm-yo}. Let
$\map{f}{\real^n}{\real^n}$ be Lebesgue measurable and locally bounded
and consider the differential equation
\begin{equation}\label{eq:dis-dyn}
  \dot x = f(x) .
\end{equation}
A map $\map{\gamma}{[0,T)}{\real^n}$ is a \emph{(Caratheodory)
  solution} of~\eqref{eq:dis-dyn} on the interval $[0,T)$ if it is
absolutely continuous on $[0,T)$ and satisfies $\dot \gamma(t) =
f(\gamma(t))$ almost everywhere in $[0,T)$.
A set $\SS \subset \real^n$ is \emph{invariant}
under~\eqref{eq:dis-dyn} if every solution starting from any point in
$\SS$ remains in $\SS$.  For a solution $\gamma$ of~\eqref{eq:dis-dyn}
defined on the time interval $[0,\infty)$, the \emph{omega-limit} set
$\Omega(\gamma)$ is defined by
\begin{align*}
  \Omega(\gamma) = \setdef{y \in \real^n}{\text{$\exists
      \{t_k\}_{k=1}^{\infty} \subset [0,\infty)$ with $\lim_{k \to
        \infty} t_k = \infty$ and $\lim_{k \to \infty} \gamma(t_k) =
      y$}} .
\end{align*}
If the solution $\gamma$ is bounded, then $\Omega (\gamma) \neq
\emptyset$ by the Bolzano-Weierstrass theorem~\cite{WR:53}.  These
notions allow us to characterize the asymptotic convergence properties
of the solutions of~\eqref{eq:dis-dyn} via invariance principles.
Given a continuously differentiable function
$\map{V}{\real^n}{\real}$, the \emph{Lie derivative of $V$
  along~\eqref{eq:dis-dyn}} at $x \in \real^n$ is $\Lie_f V(x) =
\gradient V(x)^\top f(x)$.  The next result is a simplified version
of~\cite[Proposition 3]{AB-FC:06} which is sufficient for our
convergence analysis later.

\begin{proposition}\longthmtitle{Invariance principle for
    discontinuous Caratheodory systems}\label{pr:invariance-cara}
  Let $\SS \in \real^n$ be compact and invariant. Assume that, for
  each point $x_0 \in \SS$, there exists a unique solution
  of~\eqref{eq:dis-dyn} starting at $x_0$ and that its omega-limit set
  is invariant too. Let $\map{V}{\real^n}{\real}$ be a continuously
  differentiable map such that $\Lie_f V(x) \le 0$ for all $x \in
  \SS$. Then, any solution of~\eqref{eq:dis-dyn} starting at $\SS$
  converges to the largest invariant set in $\cl(\setdef{x \in
    \SS}{\Lie_f V(x) = 0})$.
\end{proposition}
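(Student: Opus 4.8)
The plan is to mirror the classical LaSalle argument, taking care at the one place where the Caratheodory (almost-everywhere) nature of the solutions forces the closure to appear in the conclusion. Fix a solution $\gamma$ of~\eqref{eq:dis-dyn} with $\gamma(0) = x_0 \in \SS$. First I would note that, since $\SS$ is compact and invariant, the solution remains in $\SS$, is therefore bounded, and can be extended to all of $[0,\infty)$; by the Bolzano-Weierstrass argument recalled before the statement, its omega-limit set $\Omega(\gamma)$ is nonempty and contained in the compact set $\SS$. Next, since $V$ is continuously differentiable and $\gamma$ is absolutely continuous, the composition $t \mapsto V(\gamma(t))$ is absolutely continuous with $\frac{d}{dt} V(\gamma(t)) = \Lie_f V(\gamma(t)) \le 0$ for almost every $t$, so $V \circ \gamma$ is nonincreasing. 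Being also bounded below (as $V$ is continuous on the compact set $\SS$), it converges to some limit $c \in \real$ as $t \to \infty$.

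The next step is to show that $V \equiv c$ on $\Omega(\gamma)$: for any $y \in \Omega(\gamma)$ there is a sequence $t_k \to \infty$ with $\gamma(t_k) \to y$, and continuity of $V$ gives $V(y) = \lim_k V(\gamma(t_k)) = c$. I would then use the hypotheses on $\Omega(\gamma)$ to locate it inside the target set. Fix $y \in \Omega(\gamma) \subset \SS$ and let $\sigma$ be the unique solution starting at $y$; by invariance of $\Omega(\gamma)$, $\sigma(t) \in \Omega(\gamma)$ for all $t \ge 0$, whence $V(\sigma(t)) = c$ is constant. Differentiating, $\Lie_f V(\sigma(t)) = 0$ for almost every $t \ge 0$. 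Since the set of such $t$ has full measure, it accumulates at $0$, so there is a sequence $t_n \to 0^+$ with $\sigma(t_n) \in \setdef{x \in \SS}{\Lie_f V(x) = 0}$; continuity of $\sigma$ yields $\sigma(t_n) \to \sigma(0) = y$, and therefore $y \in \cl(\setdef{x \in \SS}{\Lie_f V(x) = 0})$. Hence $\Omega(\gamma)$ is an invariant subset of this closure, and is thus contained in the largest invariant set $M$ therein.

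The main obstacle is precisely this last membership argument. In the classical smooth LaSalle principle one concludes $\Lie_f V(y) = 0$ at every point $y$ of the omega-limit set directly, but here a Caratheodory solution satisfies the dynamics (and hence $\Lie_f V(\sigma(t)) = 0$) only almost everywhere, so one cannot evaluate at the single instant $t = 0$; the accumulation-plus-continuity argument recovers only that $y$ lies in the \emph{closure} of the zero set, which is exactly why the statement is phrased with $\cl(\cdot)$. To finish, I would invoke the standard fact that a bounded solution approaches its omega-limit set, i.e. $\mathrm{dist}(\gamma(t), \Omega(\gamma)) \to 0$ as $t \to \infty$; combined with $\Omega(\gamma) \subset M$, this yields $\mathrm{dist}(\gamma(t), M) \to 0$, establishing convergence of $\gamma$ to the largest invariant set in $\cl(\setdef{x \in \SS}{\Lie_f V(x) = 0})$.
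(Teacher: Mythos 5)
The paper does not actually prove this proposition: it is imported as a simplified version of \cite[Proposition~3]{AB-FC:06}, so there is no internal argument to compare yours against, and your proposal should be judged as a self-contained proof. On those terms it is correct. You follow the classical LaSalle template (monotone convergence of $V\circ\gamma$ to a limit $c$, constancy of $V$ on $\Omega(\gamma)$, invariance of $\Omega(\gamma)$ forcing $\Lie_f V=0$ along the solution through each omega-limit point, and finally $\mathrm{dist}(\gamma(t),\Omega(\gamma))\to 0$), and you correctly isolate the single genuinely nonclassical step: because a Caratheodory solution $\sigma$ through $y\in\Omega(\gamma)$ satisfies $\Lie_f V(\sigma(t))=0$ only for almost every $t$, one cannot evaluate at $t=0$, and the accumulation-at-zero argument plus continuity of $\sigma$ yields only $y\in\cl(\setdef{x\in\SS}{\Lie_f V(x)=0})$. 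That is exactly why the closure appears in the statement, and it is the same mechanism that drives the proof in \cite{AB-FC:06} (which works in greater generality, with nonpathological Lyapunov functions and set-valued Lie derivatives). Two small points worth making explicit rather than asserting: forward completeness of solutions in $\SS$ does not follow from invariance alone, but from the local boundedness of $f$ (hence boundedness on the compact set $\SS$), which makes any solution Lipschitz up to its escape time so that it has a limit in $\SS$ there and can be continued using the assumed existence of solutions from every point of $\SS$; and in the step confining $\sigma$ to $\Omega(\gamma)$ it is the hypothesized invariance of $\Omega(\gamma)$, in the paper's sense that every solution starting in the set remains in it, that does the work. Neither affects the validity of the argument.
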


\subsection{Projected dynamical systems}\label{subsec:projected}

Projected dynamical systems are a particular class of discontinuous
dynamical systems. Here, following~\cite{AN-DZ:96}, we gather some
basic notions that will be useful later to establish continuity with
respect to the initial condition of the solutions of the primal-dual
dynamics. Let $\KK \subset \real^n$ be a closed convex set.  Given a
point $y \in \real^n$, the (point) projection of $y$ onto $\KK$ is
$\proj_{\KK}(y) = \argmin_{z \in \KK} \norm{z - y}$. Note that
$\proj_{\KK}(y)$ is a singleton and the map $\proj_{\KK}$ is Lipschitz
on $\real^n$ with constant $L = 1$~\cite[Proposition 2.4.1]{FHC:83}.
Given $x \in \KK$ and $v \in \real^n$, the (vector) projection of $v$
at $x$ with respect to~$\KK$~is
\begin{equation*}
  \Pi_{\KK}(x,v) = \lim_{\delta \to 0^+} \frac{\proj_{\KK}(x+\delta v) -
    x}{\delta} .
\end{equation*}
Given a vector field $\map{f}{\real^n}{\real^n}$ and a closed convex
polyhedron $\KK \subset \real^n$, the associated projected dynamical
system is
\begin{equation}\label{eq:pds}
  \dot x = \Pi_{\KK}(x,f(x)), \quad x(0) \in \KK,
\end{equation}
Note that, at any point $x$ in the interior of $\KK$, we have
$\Pi_{\KK}(x,f(x)) = f(x)$.  At any boundary point of $\KK$, the
projection operator restricts the flow of the vector field $f$ such
that the solutions of~\eqref{eq:pds} remain in $\KK$.  Therefore, in
general,~\eqref{eq:pds} is a discontinuous dynamical system.  The next
result summarizes conditions under which the (Caratheodory) solutions
of the projected system~\eqref{eq:pds} exist, are unique, and
continuous with respect to the initial condition.

\begin{proposition}\longthmtitle{Existence, uniqueness, and continuity
    with respect to the initial condition~\cite[Theorem
    2.5]{AN-DZ:96}}\label{pr:existence-pds} 
  Let $\map{f}{\real^n}{\real^n}$ be Lipschitz on a closed convex
  polyhedron $\KK \subset \real^n$. Then,
  \begin{enumerate}
  \item (existence and uniqueness): for any $x_0 \in \KK$, there
    exists a unique solution $t \mapsto x(t)$ of the projected
    system~\eqref{eq:pds} with $x(0) = x_0$ defined over the domain
    $[0,\infty)$,
  \item (continuity with respect to the initial condition): given a
    sequence of points $\{x_k\}_{k=1}^{\infty} \subset \KK$ with
    $\lim_{k \to \infty} x_k= x$, the sequence of
    solutions $\{t \mapsto \gamma_k(t)\}_{k=1}^\infty$ of~\eqref{eq:pds}
    with $\gamma_k(0) = x_k$ for all $k$, converge
    to the solution $t \mapsto \gamma(t)$ of~\eqref{eq:pds} with
    $\gamma(0)= x$ uniformly on every compact set of $[0,\infty)$.
  \end{enumerate}
\end{proposition}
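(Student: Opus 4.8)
The plan is to recast the projected system~\eqref{eq:pds} as a differential inclusion governed by a maximal monotone operator, and then to extract existence, uniqueness, and continuous dependence on the initial condition from the monotonicity structure. First I would rewrite the vector projection. Using the directional-derivative characterization of $\proj_{\KK}$, one has $\Pi_{\KK}(x,v) = \proj_{T_{\KK}(x)}(v)$, the Euclidean projection of $v$ onto the tangent cone $T_{\KK}(x)$ of the convex set $\KK$ at $x$. By the Moreau decomposition $v = \proj_{T_{\KK}(x)}(v) + \proj_{N_{\KK}(x)}(v)$, where $N_{\KK}(x)$ is the polar normal cone, this is equivalent to saying that $\dot x = \Pi_{\KK}(x,f(x))$ solves the differential inclusion
\begin{equation*}
  \dot x \in f(x) - N_{\KK}(x), \qquad x(t) \in \KK,
\end{equation*}
with $\dot x(t)$ selected as the element of minimal norm. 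Since $N_{\KK} = \partial \Psi_{\KK}$ is the subdifferential of the indicator function $\Psi_{\KK}$ of the closed convex set $\KK$, the map $N_{\KK}$ is maximal monotone, and~\eqref{eq:pds} becomes a Lipschitz perturbation of the associated sweeping process. The polyhedral assumption on $\KK$ is convenient here, since $T_{\KK}(x)$ and $N_{\KK}(x)$ are piecewise constant (constant on the relative interior of each face), which keeps the switching structure tractable.

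For existence in part (i), I would construct a solution by a time-stepping (projected Euler, or Moreau catching-up) scheme: partition $[0,\tau]$ with step $h$ and set $x_{k+1} = \proj_{\KK}(x_k + h\, f(x_k))$. Lipschitzness of $f$ on $\KK$ together with the nonexpansiveness of $\proj_{\KK}$ (constant $L=1$) yields uniform bounds and equicontinuity of the interpolated trajectories, so by Arzel\`a--Ascoli a subsequence converges uniformly; a standard closedness argument for the maximal monotone term $N_{\KK}$ then shows that the limit is an absolutely continuous solution of the inclusion, and hence of~\eqref{eq:pds}.

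Uniqueness (the remainder of part (i)) and continuity with respect to the initial condition (part (ii)) follow from a single monotonicity estimate. Given two solutions $x_1, x_2$ with $\dot x_i = f(x_i) - n_i$ and $n_i \in N_{\KK}(x_i)$, I would compute, for almost every $t$,
\begin{equation*}
  \tfrac{1}{2}\frac{d}{dt}\norm{x_1 - x_2}^2 = \langle x_1 - x_2,\, f(x_1) - f(x_2)\rangle - \langle x_1 - x_2,\, n_1 - n_2\rangle .
\end{equation*}
Monotonicity of $N_{\KK}$ gives $\langle x_1 - x_2,\, n_1 - n_2\rangle \ge 0$, so the final term is nonpositive and may be dropped, while the first term is bounded by $L\norm{x_1 - x_2}^2$ using the Lipschitz constant $L$ of $f$. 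Gronwall's inequality then yields $\norm{x_1(t) - x_2(t)} \le e^{Lt}\norm{x_1(0) - x_2(0)}$. Taking $x_1(0) = x_2(0)$ forces $x_1 \equiv x_2$, establishing uniqueness; and for a sequence $x_k \to x$ the same bound gives $\norm{\gamma_k(t) - \gamma(t)} \le e^{Lt}\norm{x_k - x}$, which is precisely uniform convergence on each compact subinterval of $[0,\infty)$. Comparing any solution with a fixed one via the same estimate also precludes finite-time blow-up, so, together with the invariance of $\KK$, every solution extends to all of $[0,\infty)$.

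I expect the main obstacle to be the existence argument: the right-hand side of~\eqref{eq:pds} is genuinely discontinuous on $\bd(\KK)$, so the classical Picard--Lindel\"of theorem does not apply. The real work lies in passing to the limit in the time-stepping scheme and verifying that the limiting velocity belongs to $f(x) - N_{\KK}(x)$, which is where the maximal monotonicity (upper semicontinuity together with closed convex values) of the normal-cone map and the polyhedral structure of $\KK$ are essential. By contrast, once existence is secured, uniqueness and continuous dependence are comparatively routine consequences of the monotonicity and Gronwall argument above.
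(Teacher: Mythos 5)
The paper does not prove this proposition at all: it is imported verbatim as Theorem 2.5 of the cited reference \cite{AN-DZ:96}, where the argument runs through the theory of variational inequalities and the Skorokhod reflection problem. Your proposal supplies an actual proof, and it follows the other classical route --- recasting $\dot x = \Pi_{\KK}(x,f(x))$ as the differential inclusion $\dot x \in f(x) - N_{\KK}(x)$ governed by the maximal monotone normal-cone operator, constructing solutions by the Moreau catching-up scheme, and getting uniqueness and the estimate $\norm{\gamma_k(t)-\gamma(t)} \le e^{Lt}\norm{x_k - x}$ (hence part (ii)) from monotonicity plus Gronwall. This is sound and, if anything, more self-contained than the citation: the monotonicity--Gronwall estimate delivers uniqueness and continuous dependence in one stroke, whereas the reference obtains them from separate structural results on projected dynamical systems. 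Two points deserve more care than your sketch gives them. First, the equivalence between the projected ODE and the inclusion is not purely formal: a solution of the inclusion need only satisfy $\dot x(t) \in f(x(t)) - N_{\KK}(x(t))$, and the fact that the (unique) solution automatically selects the minimal-norm element $\proj_{T_{\KK}(x)}(f(x))$ for almost every $t$ is a genuine lemma (the ``slow solution'' property of Lipschitz perturbations of maximal monotone operators, due to Brezis); without it you have proved existence and uniqueness for the inclusion but not yet for~\eqref{eq:pds} as stated. Second, your global-existence argument (``comparing any solution with a fixed one'') is slightly circular as phrased, since it presupposes a globally defined solution to compare against; the clean fix is the a priori bound $\norm{\Pi_{\KK}(x,v)} \le \norm{v}$ (projection onto a closed convex cone containing the origin), which gives $\norm{\dot x} \le \norm{f(x_0)} + L \norm{x - x_0}$ and rules out finite-time blow-up directly. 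With those two repairs the argument is complete.
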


\myclearpage
\section{Problem statement}\label{sec:problem}

This section reviews the primal-dual dynamics for solving constrained
optimization problems and justifies the need to rigorously
characterize its convergence properties.  Consider the concave
optimization problem on~$\real^n$,
\begin{subequations}\label{eq:concave-opt}
  \begin{align}
    \mathrm{maximize} \quad & f(x),
    \\
    \text{subject to} \quad & g(x) \le \zeros_m, \label{eq:inequality}
  \end{align}
\end{subequations}
where the continuously differentiable functions
$\map{f}{\real^n}{\real}$ and $\map{g}{\real^n}{\real^m}$ are strictly
concave and convex, respectively, and have locally Lipschitz gradients. 
The Lagrangian of the problem~\eqref{eq:concave-opt} is given as
\begin{equation}\label{eq:lagrangian}
  L(x,\lambda) = f(x) - \lambda^\top g(x),
\end{equation}
where $\lambda \in \real^m$ is the Lagrange multiplier corresponding
to the inequality constraint~\eqref{eq:inequality}.  Note that the
Lagrangian is concave in $x$ and convex (in fact linear) in $\lm$.
Assume that the Slater's conditions is satisfied for the
problem~\eqref{eq:concave-opt}, that is, there exists $x \in \real^n$
such that $g(x) < \zeros_m$.  Under this assumption, the duality gap
between the primal and dual optimizers is zero and a point $(\xo,\lmo)
\in \DD$ is a primal-dual optimizer of~\eqref{eq:concave-opt} if and
only if it is a saddle point of $L$ over the domain $\DD$,~i.e.,
\begin{align*}
  L(x,\lm) \le L(\xo,\lmo) \quad \text{and} \quad L(\xo,\lm) \ge
  L(\xo,\lmo) ,
\end{align*}
for all $x \in \real^n$ and $\lm \in \realnonnegative^m$.  For
convenience, we denote the set of saddle points of $L$ (equivalently
the primal-dual optimizers) by $\sX \times \slm \subset \real^n \times
\real^m$. Note that since $f$ is strictly concave, the set $\sX$ is a
singleton.  Furthermore, $(\xo,\lmo)$ is a primal-dual optimizer if
and only if it satisfies the following Karush-Kuhn-Tucker (KKT)
conditions (cf.~\cite[Chapter 5]{SB-LV:09}),
\begin{subequations}\label{eq:KKT}
  \begin{align}
    \gradient f(\xo) - \sum_{i=1}^m (\lmo)_i \gradient g_i(\xo) & = 0,
    \\
    g(\xo) \le \zeros_m, \quad \lmo \ge \zeros_m, \quad \lmo^\top
    g(\xo) & = 0.
  \end{align}
\end{subequations}
Given this characterization of the solutions of the optimization
problem, it is natural to consider the \emph{primal-dual dynamics} on
$\real^n \times \realnonnegative^m$ to find them
\begin{subequations}\label{eq:p-d-dynamics}
  \begin{align}
    \dot x & = \gradient_x L(x,\lambda) = \gradient f(x) -
    \sum_{i=1}^m \lambda_i \gradient g_i(x) ,
    \\
    \dot \lambda & = [-\gradient_{\lambda} L(x,\lambda)]_{\lambda}^+ =
    [g(x)]_{\lambda}^+ .
  \end{align}
\end{subequations}
When convenient, we use the notation $\map{\Xp}{\real^n \times
  \realnonnegative^m}{\real^n \times \real^m}$ to refer to the
dynamics~\eqref{eq:p-d-dynamics}.  Given that the primal-dual dynamics
is discontinuous, we consider solutions in the Caratheodory sense. The
reason for this is that, with this notion of solution, a point is an
equilibrium of~\eqref{eq:p-d-dynamics} if and only if it satisfies the
KKT conditions~\eqref{eq:KKT}.


Our objective is to establish that the solutions
of~\eqref{eq:p-d-dynamics} exist and asymptotically converge to a
solution of the concave optimization problem~\eqref{eq:concave-opt}
using classical notions and tools from stability analysis.  Our
motivation for this aim comes from the conceptual simplicity and
versatility of Lyapunov-like methods and their amenability for
performing robustness analysis and studying generalizations of the
dynamics.  One way of tackling this problem, see
e.g.,~\cite{DF-FP:10}, is to interpret the dynamics as a
state-dependent switched system, formulate the latter as a hybrid
automaton as defined in~\cite{JL-KHJ-SNS-JZ-SSS:03}, and then employ
the invariance principle for hybrid automata to characterize its
asymptotic convergence properties.  However, this route is not valid
in general because one of the key assumptions required by the
invariance principle for hybrid automata is not satisfied by the
primal-dual dynamics. The next example justifies this claim.

\begin{example}\longthmtitle{The hybrid automaton corresponding to the
    primal-dual dynamics is not continuous}\label{ex:counter}
  {\rm Consider the concave optimization
    problem~\eqref{eq:concave-opt} on $\real$ with $f(x) = -(x-5)^2$
    and $g(x) = x^2-1$, whose set of primal-dual optimizers is $\sX
    \times \slm = \{(1,4)\}$. 
    The associated primal-dual dynamics takes the form
    \begin{subequations}\label{eq:example-dyn}
      \begin{align}
        \dot x & =-2(x-5) - 2x\lm,
        \\
        \dot \lm & = [x^2 - 1]_{\lm}^+. \label{eq:proj-part}
      \end{align}
    \end{subequations}
    We next formulate this dynamics as a hybrid automaton as defined
    in~\cite[Definition~II.1]{JL-KHJ-SNS-JZ-SSS:03}.  The idea to
    build the hybrid automaton is to divide the state space $\real
    \times \realnonnegative$ into two domains over which the vector
    field~\eqref{eq:example-dyn} is continuous. To this end, we define
    two modes represented by the discrete variable $q$, taking values
    in $\mathbf{Q} = \{1,2\}$.  The value $q = 1$ represents the mode
    where the projection in~\eqref{eq:proj-part} is active and $q = 2$
    represents the mode where it is not.  Formally, the projection is
    \emph{active} at $(x,\lm)$ if $ [g(x)]_{\lambda}^+ \neq g(x)$,
    i.e, $\lm = 0$ and $g(x) < 0$.
    The hybrid automaton is then given by the collection $H =
    (Q,X,f,\mathrm{Init},D,E,G,R)$, where $Q = \{q\}$ is the set of
    discrete variables, taking values in $\mathbf{Q}$; $X = \{x,\lm\}$
    is the set of continuous variables, taking values in $\mathbf{X} =
    \real \times \realnonnegative$; the vector field
    $\map{f}{\mathbf{Q} \times \mathbf{X}}{T \mathbf{X}}$ is defined
    by
    \begin{align*}
      f(1,(x,\lm)) & = \begin{bmatrix} -2(x-5)-2x\lm
        \\
        0 \end{bmatrix} ,
      \\
      f(2,(x,\lm)) & = \begin{bmatrix} -2(x-5)-2x\lm \\ x^2 - 1
      \end{bmatrix} ;
    \end{align*}
    $\mathrm{Init}= \mathbf{X}$ is the set of initial conditions;
    $\setmap{D}{\mathbf{Q}}{\mathbf{X}}$ specifies the domain of each
    discrete mode,
    \begin{align*}
      D(1) = (-1,1) \times \{0\}, \quad D(2) = \mathbf{X} \setminus
      D(1),
    \end{align*}
    i.e., the dynamics is defined by the vector field $(x,\lm) \to
    f(1,(x,\lm))$ over $D(1)$ and by $(x,\lm) \to f(2,(x,\lm))$ over
    $D(2)$; $E = \{(1,2),(2,1)\}$ is the set of edges specifying the
    transitions between modes; the guard map
    $\setmap{G}{\mathbf{Q}}{\mathbf{X}}$ specifies when a solution can
    jump from one mode to the other,
    \begin{align*}
      G(1,2) = \{(1,0), (-1,0)\}, \quad G(2,1) = (-1,1) \times \{0\} ,
    \end{align*}
    i.e., $ G(q,q')$ is the set of points where a solution jumps from
    mode $q$ to mode~$q'$; and, finally, the reset map
    $\setmap{R}{\mathbf{Q} \times \mathbf{X}}{\mathbf{X}}$ specifies
    that the state is preserved after a jump from one mode to another,
    \begin{align*}
      R( (1,2), (x,\lm) ) = R( (2,1), (x,\lm) ) = \{(x,\lm)\} .
    \end{align*}
    We are now ready to show that the hybrid automaton is not
    continuous in the sense defined by~\cite[Definition
    III.3]{JL-KHJ-SNS-JZ-SSS:03}. This notion plays a key role in the
    study of omega-limit sets and their stability, and is in fact a
    basic assumption of the invariance principle developed
    in~\cite[Theorem IV.1]{JL-KHJ-SNS-JZ-SSS:03}.  Roughly speaking,
    $H$ is continuous if two executions of $H$ starting close to one
    another remain close to one another. An execution of $H$ consists
    of a tuple $(\tau, q, x)$, where~$\tau$ is a hybrid time
    trajectory (a sequence of intervals specifying where mode
    transitions and continuous evolution take place), $q$ is a map
    that gives the discrete mode of the execution at each interval
    of~$\tau$, and $x$ is a set of differentiable maps that represent
    the evolution of the continuous state of the execution along
    intervals of~$\tau$. A necessary condition for two executions to
    ``remain close'' is to have the time instants of transitions in
    their mode for the executions (if there are any) close to one
    another.  To disprove the continuity of $H$, it is enough then to
    show that there exist two executions that start arbitrarily close
    and yet experience their first mode transitions at time instants
    that are not arbitrarily close.
    \begin{figure}
      \centering
      \includegraphics[width = 0.5 \linewidth]{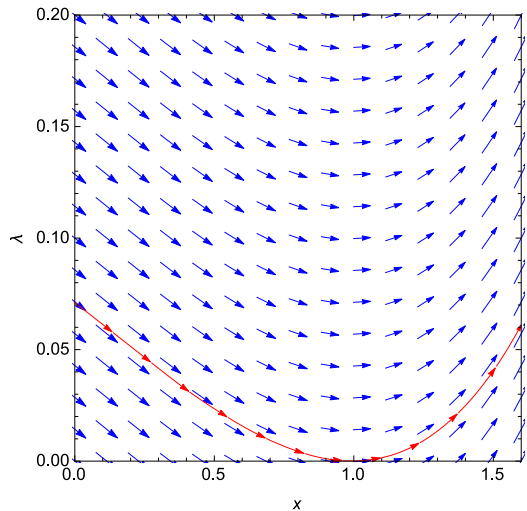}
      \caption{An illustration depicting the vector
        field~\eqref{eq:example-dyn} in the range $(x,\lm) \in [0,1.6]
        \times [0,0.2]$. As shown (with a red streamline), there
        exists a solution of~\eqref{eq:example-dyn} that starts at a
        point $(x(0),\lm(0))$ with $x(0) < 1$ and $\lm(0) > 0$ such
        that it remains in the domain $\lm > 0$ at all times except at
        one time instant $t'$ when $(x(t'),\lm(t')) =
        (1,0)$. }\label{fig:v-field}
    \end{figure}
    Select an initial condition $(x(0),\lm(0)) \in
    (0,1)\times(0,\infty)$ that gives rise to a solution
    of~\eqref{eq:example-dyn} that remains in the set $(0,1) \times
    (0,\infty)$ for a finite time interval $(0,t')$, $t' > 0$,
    satisfies $(x(t'),\lm(t')) = (1,0)$, and stays in the set
    $(1,\infty) \times (0,\infty)$ for some finite time interval
    $(t',T)$, $T > t'$.  The existence of such a solution becomes
    clear by plotting the vector field~\eqref{eq:example-dyn}, see
    Figure~\ref{fig:v-field}.  Note that by construction, this also
    corresponds to an execution of the hybrid automaton $H$ that
    starts and remains in domain $D(2)$ for the time interval $[0,T]$
    and so it does not encounter any jumps in its discrete mode.
    Specifically, for this execution, the hybrid time trajectory is
    the interval $[0,T]$, the discrete mode $q$ is always $2$ and the
    continuous state evolves as $t \mapsto (x(t),\lm(t))$. Further, by
    observing the vector field, we deduce that in every neighborhood
    of $(x(0),\lm(0))$, there exists a point
    $(\tilde{x}(0),\tilde{\lm}(0))$ such that a solution
    of~\eqref{eq:example-dyn} $t \mapsto
    (\tilde{x}(t),\tilde{\lm}(t))$ starting at
    $(\tilde{x}(0),\tilde{\lm}(0))$ reaches the set $(0,1) \times
    \{0\}$ in finite time $t_1 >0$, remains in $(0,1) \times \{0\}$
    for a finite time interval $[t_1,t_2]$, and then enters the set
    $(1,\infty) \times (0,\infty)$ upon reaching the point $(1,0)$.
    Indeed, this is true whenever $\tilde{x} < x(0)$ and $\tilde{\lm}
    < \lm(0)$. The execution of $H$ corresponding to this solution
    starts in $D(2)$, enters $D(1)$ in finite time $t_1$, and returns
    to $D(2)$ at time $t_2$. Specifically, the hybrid time trajectory
    consists of three intervals $\{[0,t_1],[t_1,t_2],[t_2,T']\}$,
    where we assume $T' > t_2$.  The discrete mode $q$ takes value $2$
    for the interval $[0,t_1]$, $1$ for the interval $[t_1,t_2]$, and
    $2$ for the interval $[t_2,T']$. The continuous state $t \mapsto
    (\tilde{x}(t),\tilde{\lm}(t))$ takes the same values as the
    solution of~\eqref{eq:example-dyn} explained above.  Thus, the
    value of the discrete variable representing the mode of the
    execution switches from $2$ to $1$ and back to $2$, whereas the
    execution corresponding to the solution of~\eqref{eq:example-dyn}
    starting at $(x(0),\lm(0))$ never switches mode. This shows that
    the hybrid automaton is not continuous. }  \oprocend
\end{example}

Interestingly, even though the hybrid automaton $H$ described in
Example~\ref{ex:counter} is not continuous, one can infer from
Figure~\ref{fig:v-field} that two solutions of~\eqref{eq:example-dyn}
remain close to each other if they start close enough. This suggests
that continuity with respect to the initial condition might hold
provided this notion is formalized the way it is done for traditional
nonlinear systems (and not as done for hybrid automata where both
discrete and continuous states have to be aligned). The next section
shows that this in fact is the case. This, along with the existence
and uniqueness of solutions, allows us to characterize the asymptotic
convergence properties of the primal-dual dynamics.

\myclearpage
\section{Convergence analysis of primal-dual
  dynamics}\label{sec:convergence}

In this section we show that the solutions of the primal-dual
dynamics~\eqref{eq:p-d-dynamics} asymptotically converge to a solution
of the constrained optimization problem~\eqref{eq:concave-opt}.  Our
proof strategy is to employ the invariance principle for Caratheodory
solutions of discontinuous dynamical systems stated in
Proposition~\ref{pr:invariance-cara}.  Our first step is then to
verify that all its hypotheses hold.

We start by stating a useful monotonicity property of the primal-dual
dynamics with respect to the set of primal-dual optimizers $\sX \times
\slm$. This property can be found in~\cite{KA-LH-HU:58,DF-FP:10} and
we include here its proof for completeness.

\begin{lemma}\longthmtitle{Monotonicity of the primal-dual dynamics
    with respect to primal-dual optimizers}\label{le:derivative-v}
  Let $(\xo,\lmo) \in \sX \times \slm$ and define $\map{V}{\real^n
    \times \real^m}{\realnonnegative}$,
  \begin{equation}\label{eq:v-func}
    V(x,\lm) = \frac{1}{2} \big(\norm{x - \xo}^2 + \norm{\lm - \lmo}^2
    \big).
  \end{equation}
  Then $\Lie_{\Xp} V(x,\lm) \le 0$ for all $(x,\lm) \in \DD$.
\end{lemma}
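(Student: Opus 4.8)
The plan is to compute the Lie derivative of $V$ along the dynamics $\Xp$ explicitly, decompose it into the contributions of the primal and dual flows, and bound each contribution so that the total is dominated by the quantity $L(x,\lmo) - L(\xo,\lm)$, which is nonpositive by the saddle-point property of $(\xo,\lmo)$. Since $V$ is continuously differentiable with $\gradient V(x,\lm) = (x-\xo,\lm-\lmo)$, substituting the form of the primal-dual dynamics~\eqref{eq:p-d-dynamics} I would write
\begin{equation*}
  \Lie_{\Xp} V(x,\lm) = (x-\xo)^\top \gradient_x L(x,\lm) + (\lm - \lmo)^\top [g(x)]_{\lm}^+ .
\end{equation*}

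For the first summand, I would exploit that, for fixed $\lm \in \realnonnegative^m$, the map $x \mapsto L(x,\lm)$ is concave, since $f$ is concave and each $g_i$ is convex with $\lm_i \ge 0$. The first-order characterization of concavity, namely $L(\xo,\lm) \le L(x,\lm) + \gradient_x L(x,\lm)^\top(\xo - x)$, rearranges to $\gradient_x L(x,\lm)^\top (x - \xo) \le L(x,\lm) - L(\xo,\lm)$, which bounds the primal term.

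The dual term is where the main obstacle lies, owing to the discontinuous projection $[\cdot]_{\lm}^+$. Here I would argue componentwise that $(\lm_i - (\lmo)_i)\,[g_i(x)]_{\lambda_i}^+ \le (\lm_i - (\lmo)_i)\, g_i(x)$ for every $i \in \until{m}$. The inequality is an equality when $\lambda_i > 0$, since then $[g_i(x)]_{\lambda_i}^+ = g_i(x)$. When $\lambda_i = 0$, one has $\lm_i - (\lmo)_i = -(\lmo)_i \le 0$, and a short case distinction on the sign of $g_i(x)$ — using $[g_i(x)]_{0}^+ = \max\{0,g_i(x)\}$ together with $(\lmo)_i \ge 0$ — confirms the claim (the only nontrivial case, $g_i(x) < 0$, gives left-hand side $0$ and nonnegative right-hand side). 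Summing over $i$ and invoking the linearity of $L$ in $\lm$, namely $(\lm - \lmo)^\top g(x) = L(x,\lmo) - L(x,\lm)$, bounds the dual term by $L(x,\lmo) - L(x,\lm)$.

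Adding the two estimates, the terms $L(x,\lm)$ cancel and I obtain $\Lie_{\Xp} V(x,\lm) \le L(x,\lmo) - L(\xo,\lm)$. Finally, the saddle-point inequalities $L(x,\lmo) \le L(\xo,\lmo) \le L(\xo,\lm)$, valid for all $x \in \real^n$ and $\lm \in \realnonnegative^m$, yield $L(x,\lmo) - L(\xo,\lm) \le 0$, which completes the argument. I expect the componentwise case analysis for the projected dual term to be the only delicate point; the remaining steps follow directly from concavity of $L$ in $x$, its linearity in $\lm$, and the definition of a saddle point.
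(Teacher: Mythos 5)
Your proof is correct and follows essentially the same route as the paper's: first-order concavity of $x \mapsto L(x,\lm)$ for the primal term, a componentwise case analysis on $\lm_i$ to compare the projected dual term with $(\lm-\lmo)^\top g(x)$, and the saddle-point inequalities to conclude. The only cosmetic difference is that the paper adds and subtracts the unprojected dual term and bounds three pieces separately, whereas you fold the projection estimate and the linearity of $L$ in $\lm$ into one step before invoking $L(x,\lmo) \le L(\xo,\lmo) \le L(\xo,\lm)$.
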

\begin{proof}
  By definition of $\Lie_{\Xp} V$ (cf. Section~\ref{subsec:disc}), we have
  \begin{align*}
    \Lie_{\Xp} V(x,\lm) & = (x - \xo)^\top \gradient_x L(x,\lm) + (\lm
    - \lmo)^\top [-\gradient_{\lm} L(x,\lm)]_{\lm}^+
    \\
    & = (x - \xo)^\top \gradient_x L(x,\lm) - (\lm - \lmo)^\top
    \gradient_{\lm} L(x,\lm)
    \\
    & \quad + (\lm - \lmo)^\top ([-\gradient_{\lm} L(x,\lm)]_{\lm}^+ +
    \gradient_{\lm} L(x,\lm)) .
  \end{align*}
  Since $L$ is concave in $x$ and convex in $\lm$, applying the first
  order condition of concavity and convexity for the first two terms
  of the above expression yields the following bound
  \begin{align*}
    \Lie_{\Xp} V(x,\lm) & \le L(x,\lm) - L(\xo,\lm) + L(x,\lmo) -
    L(x,\lm)
    \\
    & \quad + (\lm - \lmo)^\top ([-\gradient_{\lm} L(x,\lm)]_{\lm}^+ +
    \gradient_{\lm} L(x,\lm))
    \\
    & = L(\xo,\lmo) - L(\xo,\lm) + L(x,\lmo) - L(\xo,\lmo)
    \\
    & \quad + (\lm - \lmo)^\top ([-\gradient_{\lm} L(x,\lm)]_{\lm}^+ +
    \gradient_{\lm} L(x,\lm)) .
  \end{align*}
  Define the shorthand notation $M_1 = L(\xo,\lmo) - L(\xo,\lm)$, $M_2
  = L(x,\lmo) - L(\xo,\lmo)$, and $M_3 = (\lm - \lmo)^\top
  ([-\gradient_{\lm} L(x,\lm)]_{\lm}^+ + \gradient_{\lm} L(x,\lm))$,
  so that the above inequality reads
  \begin{align*}
    \Lie_{\Xp} V(x,\lm) \le M_1 + M_2 + M_3.
  \end{align*}
  Since $\lmo$ is a minimizer of the map $\lm \to L(\xo,\lm)$ over the
  domain $\realnonnegative^m$ and $\xo$ is a maximizer of the map $x
  \to L(x,\lmo)$, we obtain $M_1 , M_2 \le 0$. Replacing
  $-\gradient_{\lm} L(x,\lm) = g(x)$, one can write $M_3 =
  \sum_{i=1}^m T_i$, where for each $i$,
  \begin{align*}
    T_i = (\lm_i - (\lmo)_i)([g_i(x)]_{\lm_i}^+ - g_i(x)).
  \end{align*}
  If $\lm_i > 0$, then $[g_i(x)]_{\lm_i}^+ = g_i(x)$ and so $T_i = 0$.
  If $\lm_i = 0$, then $\lm_i - (\lmo)_i \le 0$ and
  $[g_i(x)]_{\lm_i}^+ - g_i(x) \ge 0$, which implies that $T_i \le 0$.
  Therefore, we get $M_3 \le 0$, and the result follows.  \qed
\end{proof}

Next, we show that the primal-dual dynamics can be written as a
projected dynamical system.

\begin{lemma}\longthmtitle{Primal-dual dynamics as a projected
    dynamical system}\label{le:primal-projected}
  The primal-dual dynamics can be written as a projected dynamical
  system.
\end{lemma}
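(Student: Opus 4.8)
The plan is to exhibit the explicit polyhedron and vector field that recast~\eqref{eq:p-d-dynamics} in the form~\eqref{eq:pds}. Writing the joint state as $z = (x,\lm)$, I would take the constraint set to be $\KK = \DD = \real^n \times \realnonnegative^m$, which is a closed convex polyhedron, and define the continuous vector field $\map{F}{\real^n \times \real^m}{\real^n \times \real^m}$ by
\begin{equation*}
  F(x,\lm) = \big( \gradient_x L(x,\lm), \; g(x) \big),
\end{equation*}
noting that $g(x) = -\gradient_\lm L(x,\lm)$. The goal is then to verify that $\Pi_{\KK}(z, F(z))$ coincides with the right-hand side of~\eqref{eq:p-d-dynamics} for every $z \in \KK$, which by definition exhibits~\eqref{eq:p-d-dynamics} as the projected dynamical system associated with $F$ and $\KK$.

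The central computation is the evaluation of the vector projection $\Pi_{\KK}$. Since $\KK$ is a product set, the point projection decouples: for $(y,\mu) \in \real^n \times \real^m$ one has $\proj_{\KK}(y,\mu) = (y, w)$, where $w_i = \max\{0, \mu_i\}$ for each $i \in \until{m}$. Substituting $(y,\mu) = (x + \delta \gradient_x L(x,\lm), \, \lm + \delta g(x))$ into the defining limit of $\Pi_{\KK}$, the $x$-block is unconstrained and returns $\gradient_x L(x,\lm)$ at once, reproducing $\dot x$. For the $\lm$-block I would argue componentwise: fixing $i$ and using $\lm_i \ge 0$, the task reduces to evaluating
\begin{equation*}
  \lim_{\delta \to 0^+} \frac{\max\{0, \, \lm_i + \delta g_i(x)\} - \lm_i}{\delta} .
\end{equation*}

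The heart of the argument --- and the only place requiring care --- is matching this one-sided limit to the operator $[\,\cdot\,]_{\lm_i}^+$ defined in Section~\ref{sec:prelims}. When $\lm_i > 0$, for all sufficiently small $\delta > 0$ the argument of the maximum is positive, so the limit equals $g_i(x)$; when $\lm_i = 0$, the quotient equals $\max\{0, g_i(x)\}$ for every $\delta > 0$, so the limit is $\max\{0, g_i(x)\}$. These two cases are precisely the definition of $[g_i(x)]_{\lm_i}^+$, whence the $\lm$-block of $\Pi_{\KK}(z, F(z))$ equals $[g(x)]_{\lm}^+ = \dot\lm$. I expect no genuine obstacle here: the statement is a direct verification, with the boundary case $\lm_i = 0$ being the single point that must be handled explicitly, since it is there that the projection reproduces the discontinuous switching encoded by the operator $[\,\cdot\,]_{\lm}^+$.
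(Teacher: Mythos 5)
Your proposal is correct and follows essentially the same route as the paper: take $\KK = \DD$, take the unprojected saddle-point field $(\gradient_x L, -\gradient_\lm L)$, and verify directly that the vector projection $\Pi_{\DD}$ reproduces the operator $[\,\cdot\,]_{\lm}^+$, with the only case needing care being $\lm_i = 0$. The sole cosmetic difference is that you compute the point projection onto the orthant componentwise via $\max\{0,\cdot\}$, whereas the paper phrases the same verification in terms of the index set of active constraints with inward-pointing drift; the content is identical.
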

\begin{proof}
  Consider the vector field $\map{X}{\real^n \times \real^m}{\real^n
    \times \real^m}$ defined by
  \begin{equation}\label{eq:grad-dynamics}
    X(x,\lm) = \begin{bmatrix} \gradient_x L(x,\lambda) 
      \\
      -\gradient_{\lambda} L(x,\lambda)
    \end{bmatrix}.
  \end{equation}
  We wish to show that $\Xp(x,\lm) = \Pi_{\DD}((x,\lm),X(x,\lm))$ for
  all $(x,\lm) \in \DD$.  To see this, note that the maps $\Xp$ and
  $X$ take the same values over
  $\intr(\DD) = \real^n \times \realpositive^m$.  Now consider any
  point $(x,\lm) \in \bd(\DD)$.  Let $I \subset \until{m}$ be the set
  of indices for which $\lm_i = 0$ and
  $(-\gradient_{\lm} L(x,\lm))_i < 0$. Then, there exist
  $\tilde{\delta} > 0$ such that, for all
  $\delta \in [0,\tilde{\delta})$ and for any $j \in \until{n+m}$, we
  have
  \begin{align*}
    (\proj_{\DD}( (x,\lm) + \delta X(x,\lm) ))_j =
    \begin{cases} 0, & \quad \text{ if } j-n \in I,
      \\
      (x,\lm)_j + \delta (X(x,\lm))_j, & \quad \text{ otherwise }.
    \end{cases}
  \end{align*}
  Consequently, using the definition of the projection operator, cf.
  Section~\ref{subsec:projected}, we get
  \begin{align*}
    (\Pi_{\DD}((x,\lm),X(x,\lm)))_j & = 
    \begin{cases}
      0, & \quad \text{ if } j-n \in I,
      \\
      (X(x,\lm))_j, & \quad \text{ otherwise } ,
    \end{cases}
  \end{align*}
  which implies $\Xp(x,\lm) = \Pi_{\DD}((x,\lm),X(x,\lm))$ for all
  $(x,\lm) \in \bd(\DD)$. This concludes the proof. \qed
\end{proof}

Next, we use Lemmas~\ref{le:derivative-v}
and~\ref{le:primal-projected} to show the existence, uniqueness, and
continuity of the solutions of $\Xp$ starting from $\DD$.  Our proof
strategy consists of using Lemma~\ref{le:primal-projected} and
Proposition~\ref{pr:existence-pds} to conclude the result. A minor
technical hurdle in this process is ensuring the Lipschitz property of
the vector field~\eqref{eq:grad-dynamics}, the projection of which on
$\DD$ is $\Xp$. We tackle this by using the monotonicity property of
the primal-dual dynamics stated in Lemma~\ref{le:derivative-v}
implying that a solution of $\Xp$ (if it exists) remains in a bounded
set, which we know explicitly. This further implies that, given a
starting point, there exists a bounded set such that the values of the
vector field outside this set do not affect the solution starting at
that point and hence, the vector field can be modified at the outside
points without loss of generality to obtain the Lipschitz property. We
make this construction explicit in the proof.

\begin{lemma}\longthmtitle{Existence, uniqueness, and continuity of 
    solutions of the primal-dual dynamics}\label{le:existence}
  Starting from any point $(x,\lm) \in \DD$, a unique solution $t
  \mapsto \gamma(t)$ of the primal-dual dynamics $\Xp$ exists and
  remains in $ (\DD) \, \cap \, V^{-1}(\le V(x,\lm))$.  Moreover, if a
  sequence of points $\{(x_k,\lm_k)\}_{k=1}^{\infty} \subset \DD$
  converge to $(x,\lm)$ as $k \to \infty$, then the sequence of
  solutions $\{t \mapsto \gamma_k(t)\}_{k=1}^{\infty}$ of $\Xp$
  starting at these points converge uniformly to the solution $t
  \mapsto \gamma(t)$ on every compact set of $[0,\infty)$.
\end{lemma}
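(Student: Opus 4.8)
The plan is to reduce the statement to Proposition~\ref{pr:existence-pds} via the projected-system representation of Lemma~\ref{le:primal-projected}, using the monotonicity in Lemma~\ref{le:derivative-v} to control the region visited by solutions. The obstacle is that Proposition~\ref{pr:existence-pds} requires the driving vector field to be \emph{globally} Lipschitz on the polyhedron, whereas the field $X$ in~\eqref{eq:grad-dynamics} is built from $\gradient f$ and the $\gradient g_i$, which are only \emph{locally} Lipschitz; since $\DD = \real^n \times \realnonnegative^m$ is unbounded, $X$ need not be Lipschitz on all of $\DD$. I would resolve this by a cutoff argument: replace $X$ by a globally Lipschitz field that coincides with $X$ on a large compact set, and then show the solution never leaves that set, so that it is in fact a solution of the original dynamics.

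Concretely, fix $(x,\lm) \in \DD$ and set $c = V(x,\lm)$. Since $V(x,\lm) = \tfrac12(\norm{x-\xo}^2 + \norm{\lm-\lmo}^2)$, each sublevel set $V^{-1}(\le \alpha)$ is a closed ball about $(\xo,\lmo)$, hence compact. I would pick a closed ball $\bar B \subset \real^n \times \real^m$ with $V^{-1}(\le c) \subset \intr(\bar B)$. Because $X$ is locally Lipschitz on $\real^n \times \real^m$, it is Lipschitz on the compact set $\bar B$; defining $\tilde X = X \circ \proj_{\bar B}$ then yields a field that agrees with $X$ on $\bar B$ and is globally Lipschitz on $\real^n \times \real^m$ (composition of $\proj_{\bar B}$, which is Lipschitz with constant $1$, and $X|_{\bar B}$). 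Thus $\dot z = \Pi_{\DD}(z,\tilde X(z))$ is a projected dynamical system of the form~\eqref{eq:pds} on the closed convex polyhedron $\DD$, now with a globally Lipschitz field, so Proposition~\ref{pr:existence-pds} applies and furnishes a unique solution $\tilde\gamma$ from $(x,\lm)$ on $[0,\infty)$ depending continuously on the initial condition.

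The key remaining step is confinement. As long as $\tilde\gamma$ stays in $\bar B$ we have $\tilde X = X$ along it, so $\tilde\gamma$ solves $\Xp$ there and Lemma~\ref{le:derivative-v} gives $\tfrac{d}{dt} V(\tilde\gamma(t)) = \Lie_{\Xp} V(\tilde\gamma(t)) \le 0$, whence $V(\tilde\gamma(t)) \le c$. Setting $t^* = \sup\{t : \tilde\gamma([0,t]) \subset \bar B\}$, if $t^*$ were finite then $\tilde\gamma(t^*) \in \bd(\bar B)$, yet the bound $V(\tilde\gamma(t^*)) \le c$ forces $\tilde\gamma(t^*) \in V^{-1}(\le c) \subset \intr(\bar B)$, a contradiction. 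Hence $t^* = \infty$, so $\tilde\gamma$ never leaves $\bar B$, coincides with a solution of the genuine dynamics $\Xp$, and remains in $\DD \cap V^{-1}(\le c)$. For uniqueness, any Caratheodory solution of $\Xp$ from $(x,\lm)$ satisfies $\Lie_{\Xp}V \le 0$ almost everywhere by Lemma~\ref{le:derivative-v}, so it too stays in $V^{-1}(\le c) \subset \bar B$, where $\Xp$ and the modified dynamics agree; by the uniqueness clause of Proposition~\ref{pr:existence-pds} it must equal $\tilde\gamma$.

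Finally, for continuity with respect to the initial condition, I would exploit that the cutoff can be chosen uniformly along the sequence. Given $(x_k,\lm_k) \to (x,\lm)$, continuity of $V$ gives $V(x_k,\lm_k) \to c$, so $V(x_k,\lm_k) \le c+1$ for all large $k$; enlarging $\bar B$ so that $V^{-1}(\le c+1) \subset \intr(\bar B)$, the confinement argument shows that each $\gamma_k$ (for large $k$) and $\gamma$ are all solutions of the \emph{same} globally Lipschitz projected system $\dot z = \Pi_{\DD}(z,\tilde X(z))$. Uniform convergence of $\gamma_k$ to $\gamma$ on every compact subset of $[0,\infty)$ then follows directly from the continuity clause of Proposition~\ref{pr:existence-pds}. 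I expect the confinement step to be the crux: it is what converts the merely local Lipschitz regularity into the global hypothesis demanded by Proposition~\ref{pr:existence-pds}, and it is where the monotonicity Lemma~\ref{le:derivative-v} does the essential work.
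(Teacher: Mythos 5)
Your proposal is correct and follows essentially the same route as the paper's proof: truncate the vector field via composition with a projection onto a compact convex set containing the relevant sublevel set of $V$, apply Proposition~\ref{pr:existence-pds} to the resulting globally Lipschitz projected system, and use the monotonicity of $V$ from Lemma~\ref{le:derivative-v} to confine solutions so that they coincide with solutions of $\Xp$. The only cosmetic difference is that the paper cuts off on the sublevel set $V^{-1}(\le V_0+\eps)$ rather than an enclosing ball (these are in fact the same kind of set here, since the sublevel sets of $V$ are balls about $(\xo,\lmo)$), and your explicit $t^*$ confinement argument is a correct elaboration of the step the paper states more briefly.
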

\begin{proof}
  Consider $(x(0),\lm(0)) \in \DD$ and
  let $\eps > 0$. Define $V_0 = V(x(0),\lm(0))$, where $V$ is given
  in~\eqref{eq:v-func}, and let $\WW_{\eps} = V^{-1}(\le V_0 +
  \epsilon)$. Note that $\WW_{\eps}$ is convex, compact, and
  $V^{-1}(\le V_0) \subset \intr(\WW_{\eps})$.  Let
  $\map{X^{\WW_{\eps}}}{\real^n \times \real^m} {\real^n \times
    \real^m}$ be a vector field defined as follows: equal to $X$ on
  $\WW_{\eps}$ and, for any $(x,\lm) \in (\real^n \times \real^m)
  \setminus \WW_{\eps}$,
  \begin{align*}
    X^{\WW_{\eps}}(x,\lm) = X(\proj_{\WW_{\eps}}(x, \lm)).
  \end{align*} 
  The vector field $X^{\WW_{\eps}}$ is Lipschitz on the domain
  $\real^n \times \real^m$.  To see this, note that $X$ is Lipschitz
  on the compact set $\WW_{\eps}$ with some Lipschitz constant $K > 0$
  because $f$ and $g$ have locally Lipschitz gradients.  Let
  $(x_1,\lm_1), (x_2,\lm_2) \in \real^n \times \real^m$. Then,
  \begin{align*}
    \norm{X^{\WW_{\eps}}(x_1,\lm_1) - X^{\WW_{\eps}}(x_2,\lm_2)} & =
    \norm{X(\proj_{\WW_{\eps}}(x_1,\lm_1)) -
      X(\proj_{\WW_{\eps}}(x_2,\lm_2))}
    \\
    & \le K \norm{\proj_{\WW_{\eps}}(x_1,\lm_1) -
      \proj_{\WW_{\eps}}(x_2, \lm_2)}
    \\
    & \le K \norm{(x_1,\lm_1) - (x_2,\lm_2)}.
  \end{align*}
  The last inequality follows from the Lipschitz property of the
  map $\proj_{\WW_{\eps}}$ (cf. Section~\ref{subsec:projected}). 

  Next, we employ Proposition~\ref{pr:existence-pds} to establish the
  existence, uniqueness, and continuity with respect to the initial
  condition of the solutions of the projected dynamical system,
  $\Xp^{\WW_{\eps}}$, associated with $X^{\WW_{\eps}}$ and $\DD$.  Our
  proof then concludes by showing that in fact all solutions of the
  projected system $\Xp^{\WW_{\eps}}$ starting in $\WW_\eps \cap \DD$
  are in one-to-one correspondence with the solutions of $\Xp$
  starting in $\WW_\eps \cap \DD$.  Let
  $\map{\Xp^{\WW_{\eps}}}{\DD}{\real^n \times \real^m}$ be the map
  obtained by projecting $X^{\WW_{\eps}}$ with respect to $\DD$,
  \begin{align*}
    \Xp^{\WW_{\eps}}(x,\lm) = \Pi_{\DD}( (x,\lm),X^{\WW_{\eps}}(x,\lm)
    ),
  \end{align*}
  for all $(x,\lm) \in \DD$. Since $\Xp$ is the projection of $X$ with
  respect to $\DD$, we deduce that $\Xp^{\WW_{\eps}} = \Xp$ over the
  set $\WW_\eps \cap \DD$.  Since $X^{\WW_{\eps}}$ is Lipschitz,
  following Proposition~\ref{pr:existence-pds}, we obtain that
  starting from any point in $\DD$, a unique solution of
  $\Xp^{\WW_{\eps}}$ exists over $[0,\infty)$ and is continuous with
  respect to the initial condition. Consider any solution $t \mapsto
  (\xt(t),\lmt(t))$ of $\Xp^{\WW_\eps}$ that starts in $\WW_\eps \cap
  \DD$.  Note that since the solution is absolutely continuous and $V$
  is continuously differentiable, the map $t \mapsto
  V(\xt(t),\lmt(t))$ is differentiable almost everywhere on
  $[0,\infty)$, and hence
  \begin{align*}
    \frac{d}{dt} V(\xt(t),\lmt(t)) = \Lie_{\Xp^{\WW_{\eps}}}
    V(\xt(t),\lmt(t)) ,
  \end{align*}
  almost everywhere on $[0,\infty)$. From Lemma~\ref{le:derivative-v}
  and the fact that $\Lie_{\Xp^{\WW_{\eps}}} V$ and $\Lie_{\Xp} V$ are
  the same over $\WW_\eps \cap \DD$, we conclude that $V$ is
  non-increasing along the solution. This means the solution remains
  in the set $\WW_\eps \cap \DD$. Finally, since $\Xp^{\WW_{\eps}}$
  and $\Xp$ are same on $\WW_\eps \cap \DD$, we conclude that $t
  \mapsto (\xt(t),\lmt(t))$ is also a solution of $\Xp$. Therefore,
  starting at any point in $\WW_\eps \cap \DD$, a solution of $\Xp$
  exists. Using Lemma~\ref{le:derivative-v}, one can show that, if a
  solution of $\Xp$ that starts from a point in $\WW_\eps \cap \DD$
  exists, then it remains in $\WW_\eps \cap \DD$ and so is a solution
  of $\Xp^{\WW_\eps}$. This, combined with the uniqueness of solutions
  of $\Xp^{\WW_\eps}$, implies that a unique solution of $\Xp$ exists
  starting from any point in $\WW_\eps \cap \DD$. In particular, this
  is true for the point $(x(0),\lm(0))$.  Finally, from the continuity
  of solutions of $\Xp^{\WW_\eps}$ and the one-to-one correspondence
  of solutions of $\Xp$ and $\Xp^{\WW_\eps}$ starting $\WW_\eps \cap
  \DD$, we conclude the continuity with respect to initial condition
  for solutions of $\Xp$ starting in $V^{-1}(x(0),\lm(0))$. Since
  $(x(0),\lm(0))$ is arbitrary, the result follows.
  \qed
\end{proof}

The next result states the invariance of the omega-limit set of any
solution of the primal-dual dynamics. This ensures that all hypotheses
of the invariance principle for Caratheodory solutions of
discontinuous dynamical systems, cf.
Proposition~\ref{pr:invariance-cara}, are satisfied. 

\begin{lemma}\longthmtitle{Omega-limit set of solution of primal-dual
    dynamics is invariant}\label{le:omega-inv}
  The omega-limit set of any solution of the primal-dual dynamics
  starting from any point in $\real^n \times \realnonnegative^m$ is
  invariant under~\eqref{eq:p-d-dynamics}.
\end{lemma}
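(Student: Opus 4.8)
The plan is to exploit the three properties of the primal-dual dynamics established in Lemma~\ref{le:existence}---global existence, uniqueness, and continuity with respect to the initial condition---together with the fact that $\Xp$ is autonomous, in order to run the classical argument that omega-limit sets of dynamics with continuous dependence on initial data are invariant. The key point is that the very properties that fail in the hybrid-automaton formulation (cf. Example~\ref{ex:counter}) but that are secured in the Caratheodory setting by Lemma~\ref{le:existence} are exactly what drive this proof.

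First I would fix an arbitrary solution $t \mapsto \gamma(t)$ of $\Xp$ starting at some $(x_0,\lm_0) \in \DD$. By Lemma~\ref{le:existence}, $\gamma$ is defined on $[0,\infty)$ and remains in the set $\DD \cap V^{-1}(\le V(x_0,\lm_0))$, which is compact. Hence $\gamma$ is bounded and $\Omega(\gamma) \neq \emptyset$ by the Bolzano-Weierstrass theorem. Moreover, since $\DD$ is closed, every limit point of $\gamma$ lies in $\DD$, so $\Omega(\gamma) \subset \DD$; in particular each point of $\Omega(\gamma)$ is an admissible initial condition for which Lemma~\ref{le:existence} furnishes a unique global solution remaining in $\DD$.

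Next I would take an arbitrary $y \in \Omega(\gamma)$, fix a sequence $\{t_k\}$ with $t_k \to \infty$ and $\gamma(t_k) \to y$, and let $t \mapsto \gamma_y(t)$ denote the unique solution of $\Xp$ with $\gamma_y(0) = y$. The goal is to show $\gamma_y(t) \in \Omega(\gamma)$ for all $t \ge 0$; since $\gamma_y$ is the unique solution through $y$, this is precisely the invariance of $\Omega(\gamma)$. The crucial observation is that, because $\Xp$ is autonomous, for each $k$ the time-shifted curve $s \mapsto \gamma(t_k + s)$ is again a solution of $\Xp$ and, by uniqueness, it is the solution starting at $\gamma(t_k)$. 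Since $\gamma(t_k) \to y$ with $\gamma(t_k) \in \DD$, the continuity-with-respect-to-initial-condition statement of Lemma~\ref{le:existence} yields that these shifted solutions converge to $\gamma_y$ uniformly on every compact subset of $[0,\infty)$; in particular, for each fixed $t \ge 0$ we have $\gamma(t_k + t) \to \gamma_y(t)$ as $k \to \infty$. Writing $s_k = t_k + t$, we have $s_k \to \infty$ and $\gamma(s_k) \to \gamma_y(t)$, so $\gamma_y(t) \in \Omega(\gamma)$ by definition of the omega-limit set. As $t \ge 0$ and $y \in \Omega(\gamma)$ were arbitrary, this establishes the claim.

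The conceptual crux---and the reason this lemma is nontrivial for the discontinuous primal-dual dynamics---is the identification of the time-shifted trajectory $\gamma(t_k + \cdot)$ with the solution through $\gamma(t_k)$, followed by the passage to the limit, both of which hinge on uniqueness and on continuous dependence on the initial condition. Once these are in hand from Lemma~\ref{le:existence}, the remaining steps are routine. The only point requiring a little care is to note that pointwise convergence at each fixed $t$ is all that is needed, which follows from uniform convergence on compact time intervals.
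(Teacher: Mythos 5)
Your proof is correct and is essentially the argument the paper intends: the paper omits a written proof and simply notes that the lemma ``follows the same line of argumentation'' as the invariance of omega-limit sets for locally Lipschitz vector fields (Khalil, Lemma 4.1), which is precisely the time-shift-plus-uniqueness-plus-continuous-dependence argument you give, with the needed hypotheses supplied by Lemma~\ref{le:existence}.
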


The proof of Lemma~\ref{le:omega-inv} follows the same line of
argumentation that the proof of invariance of omega-limit sets of
solutions of locally Lipschitz vector fields, cf. ~\cite[Lemma
4.1]{HKK:02}. 
We are now ready to establish our main result, the asymptotic
convergence of the solutions of the primal-dual dynamics to a solution
of the constrained optimization problem.

\begin{theorem}\longthmtitle{Convergence of the primal-dual dynamics
    to a primal-dual optimizer}\label{th:convergence}
  The set of primal-dual solutions of~\eqref{eq:concave-opt} is
  globally asymptotically stable on $\real^n \times
  \realnonnegative^m$ under the primal-dual
  dynamics~\eqref{eq:p-d-dynamics}, and the convergence of each
  solution is to a point.
\end{theorem}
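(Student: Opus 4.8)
The plan is to verify the hypotheses of the invariance principle for discontinuous Caratheodory systems (Proposition~\ref{pr:invariance-cara}) and then identify the resulting invariant set with the primal--dual optimizers. First I would fix an arbitrary initial condition $(x(0),\lm(0)) \in \DD$ and an optimizer $(\xo,\lmo) \in \sX \times \slm$, and let $V$ be the function from Lemma~\ref{le:derivative-v} associated with this optimizer. Setting $\SS = \DD \cap V^{-1}(\le V(x(0),\lm(0)))$, this set is the intersection of the closed set $\DD$ with a sublevel set of the radially unbounded $V$, hence compact; by Lemma~\ref{le:existence} it is invariant and every solution starting in it is unique and stays in it, and by Lemma~\ref{le:omega-inv} the omega-limit set of such a solution is invariant. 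Since $V$ is $C^1$ with $\Lie_{\Xp} V \le 0$ on $\SS$ (Lemma~\ref{le:derivative-v}), Proposition~\ref{pr:invariance-cara} applies and yields convergence of the solution $\gamma$ to the largest invariant set contained in $\cl(\setdef{(x,\lm)\in\SS}{\Lie_{\Xp}V(x,\lm)=0})$.

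The next step is to pin down this zero set, and here strict concavity does the decisive work. I would revisit the inequality chain in the proof of Lemma~\ref{le:derivative-v}, namely $\Lie_{\Xp}V \le M_1 + M_2 + M_3$ with $M_1,M_2,M_3 \le 0$. Consequently $\Lie_{\Xp}V(x,\lm)=0$ forces $M_2 = L(x,\lmo) - L(\xo,\lmo) = 0$; since $f$ is strictly concave, the map $x \mapsto L(x,\lmo)$ is strictly concave with unique maximizer $\xo$, so $M_2 = 0$ gives $x = \xo$. Thus the zero set, and therefore its closure, is contained in $(\{\xo\}\times\realnonnegative^m)\cap\SS$. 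This reduction of the vanishing condition on $\Lie_{\Xp}V$ to the single-point identity $x=\xo$ is the crucial simplification.

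It then remains to show that the limit set consists only of optimizers. Here I would argue directly on the omega-limit set: since $V$ is non-increasing along $\gamma$ and bounded below, $V(\gamma(t))$ tends to some limit $c$, so $\Omega(\gamma)\subseteq\{V=c\}$, and invariance of $\Omega(\gamma)$ forces $\Lie_{\Xp}V \equiv 0$ on it, placing $\Omega(\gamma)$ in the zero set itself (no closure needed). On $\Omega(\gamma)$ every point has first component $\xo$, so any solution $t\mapsto(\xo,\lm(t))$ lying in it has $\dot x \equiv 0$, which forces $\gradient f(\xo) - \sum_i \lm_i(t)\gradient g_i(\xo) = 0$, i.e.\ the stationarity KKT condition. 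With $x\equiv\xo$ one has $\tfrac{d}{dt}V = M_1 + M_3 = 0$ along such a solution, and since $M_1,M_3\le 0$ this gives $M_1 = (\lm-\lmo)^\top g(\xo) = \lm^\top g(\xo) = 0$ (using $\lmo^\top g(\xo)=0$); together with $g(\xo)\le\zeros_m$ and $\lm\ge\zeros_m$ this is complementary slackness. Every point of $\Omega(\gamma)$ thus satisfies the KKT conditions~\eqref{eq:KKT}, hence is a primal--dual optimizer, so $\Omega(\gamma)\subseteq\sX\times\slm$. I expect the main obstacle to be precisely this last characterization of the $\lm$-component: because the projection in $\dot\lm$, and hence $\Lie_{\Xp}V$, is discontinuous across $\lm_i=0$, one must argue carefully that the limit set lands in the exact zero set and that complementary slackness is recovered, so that no spurious non-optimal boundary points survive.

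Finally, I would upgrade convergence-to-a-set into convergence-to-a-point and global asymptotic stability. As $\gamma$ is bounded, $\Omega(\gamma)$ is nonempty, invariant, and contained in $\{\xo\}\times\slm$; picking $(\xo,\lmb)\in\Omega(\gamma)$, which is itself an optimizer, and re-applying Lemma~\ref{le:derivative-v} with $(\xo,\lmb)$ as reference, the associated Lyapunov function is non-increasing along $\gamma$ and admits a subsequence converging to $0$ by the choice of $\lmb$; monotonicity then forces it to converge to $0$, so $\gamma(t)\to(\xo,\lmb)$, a single point. Lyapunov stability of $\sX\times\slm$ follows from the non-increase and positive definiteness of $V$ relative to the optimizer set, and combined with the global attractivity just established this yields global asymptotic stability, completing the proof.
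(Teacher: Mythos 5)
Your proposal is correct and follows essentially the same route as the paper: the same Lyapunov function $V$, the same three lemmas feeding into Proposition~\ref{pr:invariance-cara}, the same use of strict concavity to force $x=\xo$ on the set where $\Lie_{\Xp}V$ vanishes, and the same KKT-based identification of the limit set with $\sX\times\slm$. Your last paragraph, which upgrades set convergence to point convergence by re-centering $V$ at a point of the omega-limit set, is a more explicit rendering of an argument the paper only sketches, but it is not a different method.
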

\begin{proof}
  Let $(\xo,\lmo) \in \sX \times \slm$ and consider the function~$V$
  defined in~\eqref{eq:v-func}.  For $\delta > 0$, consider the
  compact set $\SS = V^{-1}(\le \delta) \cap (\real^n \times
  \realnonnegative^m)$.  From Lemma~\ref{le:existence}, we deduce that
  a unique solution of $\Xp$ exists starting from any point in $\SS$,
  which remains in~$\SS$.  Moreover, from Lemma~\ref{le:omega-inv},
  the omega-limit set of each solution starting from any point in
  $\SS$ is invariant. Finally, from Lemma~\ref{le:derivative-v},
  $\Lie_{\Xp} V(x,\lm) \le 0$ for all $(x,\lm) \in \SS$. Therefore,
  Proposition~\ref{pr:invariance-cara} implies that any solution of
  $\Xp$ staring in $\SS$ converges to the largest invariant set $M$
  contained in $\cl(Z)$, where $Z = \setdef{(x,\lm) \in
    \SS}{\Lie_{\Xp} V(x,\lm) = 0}$.  From the proof of
  Lemma~\ref{le:derivative-v}, $\Lie_{\Xp} V(x,\lm) = 0$ implies
  \begin{align*}
    L(\xo,\lmo) - L(\xo,\lm) &= 0,
    \\
    L(x,\lmo) - L(\xo,\lmo) & = 0,
    \\
    (\lm_i - (\lmo)_i)([g_i(x)]_{\lm_i}^+ - g_i(x)) & = 0 , \quad
    \text{for all } i \in \until{m}.
  \end{align*}
  Since $f$ is strictly concave, so is the function $x \mapsto
  L(x,\lmo)$ and thus $L(x,\lmo) = L(\xo,\lmo)$ implies $x = \xo$.
  The equality $L(\xo,\lmo) - L(\xo,\lm) = 0$ implies $\lm^\top g(\xo)
  = 0$. Therefore $Z = \setdef{(x,\lm) \in \SS}{x = \xo, \lm^\top
    g(\xo) = 0}$ is closed. 
  Let $(\xo,\lm) \in M \subset Z$.  The solution of $\Xp$ starting at
  $(\xo,\lm)$ remains in $M$ (and hence in $Z$) only if $\gradient
  f(\xo) - \sum_{i=1}^m \lm_i \gradient g_i(\xo) = 0$. This implies
  that $(\xo,\lm)$ satisfies the KKT conditions~\eqref{eq:KKT} and
  hence, $M \subset \sX \times \slm$.  Since the initial choice
  $\delta > 0$ is arbitrary, we conclude that the set $\sX \times
  \slm$ is globally asymptotically stable on $\real^n \times
  \realnonnegative^m$.  Finally, we note that convergence is to a
  point in $\sX \times \slm$.  This is equivalent to saying that the
  omega-limit set $\Omega(x,\lm) \subset \sX \times \slm$ of any
  solution $t \mapsto (x(t),\lm(t))$ of $\Xp$ is a singleton. This
  fact follows from the definition of omega-limit set and the fact
  that, by Lemma~\ref{le:derivative-v}, primal-dual optimizers are
  Lyapunov stable.
  This concludes the proof.  \qed
\end{proof}

\begin{remark}\longthmtitle{Alternative proof
    strategy via evolution variational inequalities}\label{re:alternative}
  {\rm We briefly describe here an alternative proof strategy to the
    one we have used here to establish the asymptotic convergence of
    the primal-dual dynamics.  The Caratheodory solutions of the
    primal-dual dynamics can also be seen as solutions of an evolution
    variational inequality (EVI) problem~\cite{BB-DG:05}. Then, one
    can show that the resulting EVI problem has a unique solution
    starting from each point in $\DD$, which moreover remains in
    $\DD$. With this in place, the LaSalle Invariance
    Principle~\cite[Theorem 4]{BB-DG:05} for the solutions of the EVI
    problem can be applied to conclude the convergence to the set of
    primal-dual optimizers. } \oprocend
\end{remark}

\begin{remark}\longthmtitle{Primal-dual dynamics with
    gains}\label{re:c-dynamics}
  {\rm In power network optimization
    problems~\cite{CZ-UT-NL-SL:14,EM-CZ-SL:14,XZ-AP:14} and network
    congestion control problems~\cite{JTW-MA:04,SHL-FP-JCD:02},
    it is common
    to see generalizations of the primal-dual dynamics involving gain
    matrices.  Formally, these dynamics take the form
    \begin{subequations}\label{eq:p-d-dynamics-gains}
      \begin{align}
        \dot x & = K_1 \gradient_x L(x,\lambda) ,
        \\
        \dot \lambda & = K_2 [-\gradient_{\lambda}
        L(x,\lambda)]_{\lambda}^+,
      \end{align}
    \end{subequations}
    where $K_1 \in \real^{n \times n}$ and $K_2 \in \real^{m \times
      m}$ are diagonal, positive definite matrices.  In such cases,
    the analysis performed here can be replicated following the same
    steps but using instead the Lyapunov function
    \begin{align*}
      V'(x,\lm) = \frac{1}{2} ((x-\xo)^\top K_1^{-1} (x-\xo)
      +(\lm-\lmo)^\top K_2^{-1} (\lm-\lmo)) ,
    \end{align*}
    to establish the required monotonicity and convergence properties
    of~\eqref{eq:p-d-dynamics-gains}.}\oprocend
\end{remark}

\myclearpage
\section{Conclusions}\label{sec:conclusions}

We have considered the primal-dual dynamics for a constrained
concave optimization problem and established the asymptotic
convergence of its Caratheodory solutions to a primal-dual optimizer
using classical notions from stability theory.  Our technical approach
has employed results from projected dynamical systems to establish
existence, uniqueness, and continuity of the solutions, and the
invariance principle for discontinuous Caratheodory systems to
characterize their asymptotic convergence.  We have also shown by
means of a counterexample how a proof strategy based on interpreting
the primal-dual dynamics as a hybrid automaton is not valid in general
because of the lack of continuity (understood in the hybrid sense) of
the solutions.  The technical approach presented in the paper opens up
the possibility of rigorously characterizing the robustness properties
of the primal-dual dynamics against unmodeled dynamics, disturbances,
and noise. Motivated by applications to power networks, we also plan
to explore the design of discontinuous dynamics that can find the
solutions to semidefinite programs and quadratically constrained
quadratic programs.

\section{Acknowledgements}
The first and the third author wish to thank Dr. Bahman Gharesifard
and Dr. Dean Richert for fruitful discussions on the primal-dual
dynamics.  This research was partially supported by NSF Award
ECCS-1307176, Los Alamos National Lab through a DoE grant, and DTRA
under grant 11376437.

\setlength{\bibsep}{1.1ex}
{\small

}

\end{document}